\newtheorem{theorem}{Theorem}[section]
\newtheorem{definition}[theorem]{Definition}
\newtheorem{corollary}[theorem]{Corollary}
\begin{document}

\author{Teodor Banica}
\address{T.B.: Department of Mathematics, Cergy-Pontoise University, 95000 Cergy-Pontoise, France. {\tt teodor.banica@u-cergy.fr}}

\author{Uwe Franz}
\address{U.F.: Department of Mathematics, Universit\'e de Franche-Comt\'e, 16 route de Gray, 25030 Besan\c con cedex, France. {\tt uwe.franz@math.univ-fcomte.fr}}

\author{Adam Skalski}
\address{A.S.: Institute of Mathematics of the Polish Academy of Sciences, ul. \'Sniadeckich 8, 00-956 Warszawa, Poland. {\tt a.skalski@impan.pl}}

\title{Idempotent states and the inner linearity property}

\keywords{Idempotent state, Hopf image, Inner linearity}
\subjclass[2000]{28C10 (16W30, 46L65)}

\begin{abstract}
We find an analytic formulation of the notion of Hopf image, in terms of the associated idempotent state. More precisely, if $\pi:A\to M_n(\mathbb C)$ is a finite dimensional representation of a Hopf $C^*$-algebra, we prove that the idempotent state associated to its Hopf image $A'$ must be the convolution Ces\`aro limit of the linear functional $\varphi=tr\circ\pi$. We discuss then some consequences of this result, notably to inner linearity questions.
\end{abstract}

\maketitle

\section*{Introduction}

The compact quantum groups were axiomatized by Woronowicz in \cite{wo2}. The idea is that such a quantum group $G$ is an abstract object, which cannot be viewed as a set, but which is described by a well-defined algebra $A=C(G)$, which must be a Hopf $C^*$-algebra.

Woronowicz's axiomatization covers as well the discrete quantum groups. Indeed, associated to $A=C(G)$ is the discrete quantum group $\Gamma=\widehat{G}$ given by $A=C^*(\Gamma)$. Once again, $\Gamma$ is an abstract object, not a set. See \cite{wo2}.

This joint axiomatization proved to be fruitful for a number of purposes. Consider indeed a Hopf $C^*$-algebra $A$, and a representation $\pi:A\to M_n(\mathbb C)$:
\begin{enumerate}
\item We can call ``Hopf image'' of $\pi$ the smallest Hopf $C^*$-algebra quotient $A\to A'$ producing a factorization $\pi:A\to A'\to M_n(\mathbb C)$.

\item We can say that $\pi$ is ``inner faithful'' if $A=A'$, and call $A$ ``inner linear'' if it has at least  one inner faithful representation.
\end{enumerate}

The point is that in the case $A=C^*(\Gamma)$ the representation $\pi$ must come from a group representation $\pi':\Gamma\to U_n$, and we have $A'=C^*(\Gamma')$, where $\Gamma'=\pi'(\Gamma)$. Also, $\pi$ is inner faithful if and only if $\pi'$ is faithful, and $A$ is inner linear if and only if $\Gamma$ is linear.

These notions, emerging from the work in \cite{bbs}, were introduced and studied in \cite{bbi}, then in \cite{abi}. They are related to a number of key questions, coming from the Connes embedding problem for Wang's free quantum groups, and from a number of key problems regarding subfactors and Hadamard matrices. We will discuss here some of these questions.

The aim of the present paper is to develop an analytic point of view on these notions, by relating them to the theory of idempotent states, developed in \cite{fs1}, \cite{fs2}, \cite{fst}, \cite{ssk}.

Our main result, stated and proved in sections 1-2 below, will be an idempotent state formulation for the notion of Hopf image. As a consequence, we will have as well an idempotent state formulation for the notion of inner linearity, that we will further develop in section 3. Finally, in section 4 we discuss a number of open questions.

\subsection*{Acknowledgements}

The work of T.B. was supported by the ANR grant ``Granma''. U.F. was supported by the ANR grant 2011 BS01 008 01. A.S. was partly supported by the National Science Centre (NCN) grant no. 2011/01/B/ST1/05011. Part of this work was done during a visit of U.F. at the Banach Center in Warsaw in October 2011. We would like to thank the referee for the comments improving the clarity of the presentation.

\section{States and images}

A compact quantum group $G$ is an abstract object, having no points in general, but which is described by a well-defined algebra $C(G)$, which must be a Hopf $C^*$-algebra. The axioms for Hopf $C^*$-algebras, found by Woronowicz in \cite{wo2}, are as follows:

\begin{definition}
A Hopf $C^*$-algebra is a $C^*$-algebra $A$, given with a morphism of $C^*$-algebras $\Delta:A\to A\otimes A$, called comultiplication, subject to the following  conditions:
\begin{enumerate}
\item Coassociativity: $(\Delta\otimes id)\Delta=(id\otimes\Delta)\Delta$.

\item $\overline{span}\,\Delta(A)(A\otimes 1)=\overline{span}\,\Delta(A)(1\otimes A)=A\otimes A$.
\end{enumerate}
\end{definition}

The basic example is $A=C(G)$, where $G$ is a compact group, with $\Delta f(g,h)=f(gh)$. The fact that $\Delta$ is coassociative corresponds to $(gh)k=g(hk)$, and the conditions in (2) correspond to the cancellation rules $gh=gk\implies h=k$ and $gh=kh\implies g=k$.

The other main example is $A=C^*(\Gamma)$, where $\Gamma$ is a discrete group, with comultiplication $\Delta(g)=g\otimes g$. One can prove that any Hopf $C^*$-algebra which is cocommutative, in the sense that $\Sigma\Delta=\Delta$, where $\Sigma(a\otimes b)=b\otimes a$ is the flip, is of this form.

These basic facts, together with some other general results in \cite{wo2}, lead to:

\begin{definition}
Associated to any Hopf $C^*$-algebra $A$ are a compact quantum group $G$ and a discrete quantum group $\Gamma=\widehat{G}$, according to the formula $A=C(G)=C^*(\Gamma)$.
\end{definition}

The meaning of this definition is of course quite formal. The idea is that, with a suitable definition for morphisms, the Hopf $C^*$-algebras form a category $H$. One can define then the categories of compact and discrete quantum groups to be $\widehat{H}$, and $H$ itself, and these categories extend those of the usual compact and discrete groups. See \cite{wo2}.

Woronowicz's axiomatization proved to be fruitful for a number of purposes. Among others, we have the following key definition from \cite{bbi}, emerging from the work in \cite{bbs}:

\begin{definition}
The Hopf image of a representation $\pi:A\to M_n(\mathbb C)$ is the smallest Hopf $C^*$-algebra quotient $A\to A'$ producing a factorization $\pi:A\to A'\to M_n(\mathbb C)$.
\end{definition}

The last definition requires an explanation: in fact each Hopf $C^*$-algebra $A$ as defined in Definition 1.1 admits a unique dense Hopf $^*$-algebra $\mathcal{A}$, and Hopf $^*$-algebras arising in this way, called \emph{CQG-algebras}, admit an intrinsic characterisation (see \cite{dik}). When we talk above about a smallest Hopf $C^*$-algebra quotient we agree to identify $C^*$-Hopf algebras with identical underlying CQG algebra. An alternative solution would be to formulate everything in the purely algebraic language of CQG algebras (as it is in \cite{bbi}), but we prefer to stick to $C^*$-algebras to make a more direct connection to open problems discussed in Section 4.

In order to understand the notivation behind the  notion of the Hopf image, let $A=C^*(\Gamma)$. Then $\pi$ must come from a unitary group representation $\pi':\Gamma\to U_n$, and we have $A'=C^*(\Gamma')$, where $\Gamma'=\pi'(\Gamma)$.

In the above computation $\Gamma$ was of course a usual discrete group. In the general case, i.e. when $\Gamma$ is a discrete quantum group, it is only known that the Hopf image exists, and is unique \cite{bbi}. But, of course, the discrete quantum group point of view is very useful.

Observe also that in the case $A=C(G)$, with $G$ compact group, any representation $\pi:A\to M_n(\mathbb C)$ must come from the evaluation at some points $g_1,\ldots,g_n \in G$. Thus the Hopf image is simply $A'=C(G')$, where $G'=\overline{<g_1,\ldots,g_n>}$. See \cite{abi}, \cite{bbi}.

Here are a few more definitions from \cite{bbi}, based on the same philosophy:

\begin{definition}
Let $A$ be a Hopf $C^*$-algebra.
\begin{enumerate}
\item A representation $\pi:A\to M_n(\mathbb C)$ is called inner faithful if $A=A'$.

\item $A$ is called inner linear if it has an inner faithful representation.
\end{enumerate}
\end{definition}

Observe that with $A=C^*(\Gamma)$, and with the above notations, $\pi$ is inner faithful if and only if $\pi'$ is faithful. Also, $A$ is inner linear if and only if $\Gamma$ is linear. See \cite{bbi}. Note also that if $A$ is the universal $C^*$-completion of its underlying CQG algebra $\mathcal{A}$ (as is the case in all the examples we study in the following sections), then representations of $A$ are in a 1-1 correspondence with these of $\mathcal{A}$.

We recall now that the state space of $A$ is endowed with the convolution product $\varphi*\psi=(\varphi\otimes\psi)\Delta$. We have the following definition, from \cite{fs1}:

\begin{definition}
A state $\varphi:A\to\mathbb C$ is called idempotent if $\varphi*\varphi=\varphi$.
\end{definition}

The basic example of idempotent state is the Haar functional $h:A\to\mathbb C$. More generally, we have an idempotent state associated to any quantum subgroup $H\subset G$. Indeed, such a quantum subgroup $H\subset G$ must come from a surjective morphism of Hopf $C^*$-algebras $\pi:C(G)\to C(H)$, and we have the following definition:

\begin{definition}
Associated to any surjective Hopf $C^*$-algebra morphism $\pi:A\to B$ is the idempotent state $\varphi_B=h_B\pi$, where $h_B:B\to\mathbb C$ is the Haar functional of $B$.
\end{definition}

In the classical case one can prove that all the idempotent states come from closed subgroups, via the above construction \cite{kit}. However, in the general quantum case, this fundamental result does not hold \cite{pal}. We refer to the series of papers \cite{fs1}, \cite{fs2}, \cite{fst}, \cite{ssk} for more details on this question, and for the general theory of idempotent states.

\section{The main result}

We state and prove here our main result. We recall from Definition 1.3 above that associated to any representation $\pi:A\to M_n(\mathbb C)$ is its Hopf image $A\to A'$. The problem is to ``locate'' this Hopf image, and one concrete question in this sense is that of computing its associated idempotent state, in the sense of Definition 1.6.

In order to answer this question, we need one more definition:

\begin{definition}
We denote by $\tilde{\varphi}$ the convolution Ces\`aro limit of a state $\varphi$:
$$\tilde{\varphi}=\lim_{N\to\infty}\frac{1}{N}\sum_{k=1}^N\varphi^{*k}$$
\end{definition}

We will use the following fundamental fact, due to Woronowicz: given any faithful state $\varphi:A\to\mathbb C$, the corresponding Ces\`aro limit $\tilde{\varphi}$ coincides with the Haar functional, $\tilde{\varphi}=h$. This is indeed how the Haar functional $h:A\to\mathbb C$ can be constructed. See \cite{wo2}.

Back now to the above question, the answer is particularly simple:

\begin{theorem}
Let $\pi:A\to M_n(\mathbb C)$ be a representation of a Hopf $C^*$-algebra $A$, with Hopf image $A'$. Then the idempotent state corresponding to $A'$ is $\tilde{\varphi}$, where $\varphi=tr\circ\pi$.
\end{theorem}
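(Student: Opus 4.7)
The plan is to reduce the claim to the analogous statement on the Hopf image $A'$ itself, and then to prove on $A'$ that the Ces\`aro limit of $\varphi':=\mathrm{tr}\circ\pi'$ coincides with the Haar state $h'$ of $A'$. Writing $\pi=\pi'\circ q$ for the factorisation through $q\colon A\to A'$, and using that $q$ is a Hopf $^*$-algebra morphism, one obtains $\varphi^{*k}=\varphi'^{*k}\circ q$ for every $k$, and therefore $\tilde\varphi=\widetilde{\varphi'}\circ q$. Granting $\widetilde{\varphi'}=h'$, this gives $\tilde\varphi=h'\circ q$, which is the idempotent state associated with $A'$ in the sense of Definition 1.7.

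To prove $\widetilde{\varphi'}=h'$ I would compare the two states on the matrix coefficients $u^\alpha_{ij}$ of irreducible corepresentations $\alpha$ of $A'$; since both are continuous and the underlying CQG algebra $\mathcal A'$ is dense in $A'$, agreement there is enough. For a state $\psi$ on $A'$ set $\Psi^\alpha_\psi:=[\psi(u^\alpha_{ij})]\in M_{d_\alpha}(\mathbb C)$. The identity $\Delta(u^\alpha_{ij})=\sum_k u^\alpha_{ik}\otimes u^\alpha_{kj}$ turns $\psi\mapsto\Psi^\alpha_\psi$ into a convolution-to-matrix-multiplication map, so $\Psi^\alpha_{\widetilde{\varphi'}}$ is the Ces\`aro limit of the powers of the contraction $\Psi^\alpha_{\varphi'}$, i.e.\ the projection onto its $1$-eigenspace. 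Because $\Psi^\alpha_{h'}=0$ for every non-trivial $\alpha$, the theorem reduces to showing that $\Psi^\alpha_{\varphi'}$ has no non-zero fixed vector for non-trivial $\alpha$.

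The heart of the argument is an absorption lemma: $\Psi^\alpha_{\varphi'}=(\mathrm{id}\otimes\mathrm{tr}_n)(\rho^\alpha)$ where $\rho^\alpha:=(\mathrm{id}\otimes\pi')(u^\alpha)\in M_{d_\alpha}\otimes M_n$ is a unitary, and if $v$ were a unit fixed vector then rewriting $\langle v,\Psi^\alpha_{\varphi'}v\rangle=1$ as $\mathrm{tr}_n\bigl((v^*\otimes I_n)\rho^\alpha(v\otimes I_n)\bigr)=1$ for a contraction in $M_n$ forces that contraction to be $I_n$; unitarity of $\rho^\alpha$ then upgrades this to $\rho^\alpha(v\otimes w)=v\otimes w$ for every $w\in\mathbb C^n$. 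In algebraic terms, the elements $b_i:=\sum_j v_j u^\alpha_{ij}-v_i\cdot 1\in\mathcal A'$ all lie in $\ker\pi'$.

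To finish, one invokes inner faithfulness of $\pi'$ on $A'$. A direct computation gives $\Delta(b_i)=\sum_k u^\alpha_{ik}\otimes b_k+b_i\otimes 1$, together with $\epsilon(b_i)=0$ and $S(b_i)\in\sum_k\mathcal A'\cdot b_k$ from the Hopf identities. Since $\pi'$ is a $^*$-map, $\mathrm{span}\{b_i,b_i^*\}$ then generates a Hopf $^*$-ideal of $\mathcal A'$ contained in $\ker\pi'$, which by inner faithfulness must vanish. Hence $b_i=0$, and applying $h'$ to $\sum_j v_j u^\alpha_{ij}=v_i\cdot 1$ gives $v_i=0$ for every $i$ (since $h'(u^\alpha_{ij})=0$ for non-trivial $\alpha$), contradicting $\|v\|=1$. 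The main obstacle is the absorption step together with the Hopf-ideal verification, which are what convert the analytic fact that $v$ is fixed by $\Psi^\alpha_{\varphi'}$ into an algebraic statement that inner faithfulness can eliminate.
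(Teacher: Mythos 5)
Your proof is correct, but it takes a genuinely different route from the paper's. The paper first uses the classification of \emph{tracial} idempotent states (Theorem 3.3 of \cite{fst}) to realize $\tilde{\varphi}$ as the Haar idempotent of a quotient $A_\varphi$ with kernel $N_{\tilde{\varphi}}$, then compares ideals: the inclusion $I_\pi^+\subseteq N_{\tilde{\varphi}}$ comes from faithfulness of the trace on tensor powers of $M_n(\mathbb C)$, and the reverse identification comes from a domination argument --- every positive functional $\psi$ on $M_n(\mathbb C)$ is dominated by a multiple of $tr$, so Van Daele's absorption lemma gives $(\psi\circ\pi)*\tilde{\varphi}=\psi(I)\tilde{\varphi}$, which is then extended (via the finite topology and the fundamental theorem on coalgebras) to all functionals vanishing on $I_\pi^+$, in particular to $h'\circ\gamma$. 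You instead reduce to the inner faithful factorization $\pi'$ on $A'$ and prove $\widetilde{\varphi'}=h'$ corepresentation by corepresentation, combining the mean ergodic theorem for the contractions $\Psi^\alpha_{\varphi'}$, your absorption lemma (a compression of a unitary whose normalized trace equals $1$ must be the identity), and the Hopf-ideal description of inner faithfulness from \cite{bbi}. This is essentially the mechanism of the paper's own proof of Theorem 3.2 pushed back to Theorem 2.2: it is more self-contained (no appeal to \cite{fst}, \cite{van} or \cite{dnr}), makes Corollary 2.3 immediate, and exhibits concretely why the trace ``sees'' the whole Hopf image; the price is reliance on Peter--Weyl theory and on the universality properties of the Hopf image. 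Two small points to tighten: state explicitly that $\pi'$ is inner faithful on $A'$ (this is part of the construction in \cite{bbi}); and when verifying antipode-stability of the ideal generated by the $b_i$ and $b_i^*$, note that $S(b_i^*)=(S^{-1}(b_i))^*$, so besides $S(b_i)=-\sum_k S(u^\alpha_{ik})b_k$ you also need $S^{-1}(b_i)=-\sum_k b_k S^{-1}(u^\alpha_{ik})$, which follows from the co-opposite antipode identity.
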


\begin{proof}
Given a positive functional $f:A\to\mathbb{C}$, we denote by $N_f=\{a\in A|f(a^*a)=0\}$ its null space. This is a always left ideal. If $f$ is tracial, then $N_f$ is a two-sided $*$-ideal.

The state $\varphi=tr\circ\pi$ is by definition tracial, and since the convolution preserves traciality, $\tilde{\varphi}$ is a tracial idempotent state on $A$. Therefore, Theorem 3.3 in \cite{fst} implies that
$\tilde{\varphi}$ is a ``Haar'' idempotent state, i.e.\ it is induced by a Hopf $C^*$-algebra quotient $\pi_\varphi:A\to A_\varphi$ as $\tilde{\varphi}=h_\varphi \circ \pi_\varphi$, where $h_\varphi$ denotes the Haar state of $A_\varphi$, and ${\rm ker}\, \pi_\varphi=N_{\tilde{\varphi}}$.

In order to prove that $A'\simeq A_\varphi$, we will show that the ideals arising as kernels of the respective quotient maps are equal. By using the formulae in \cite{fst}, it is sufficient to show that the Haar states $h',h_\varphi$ of $A',A_\varphi$ induce the same idempotent state on $A$. By density it suffices to conduct the proof on the level of the corresponding CQG algebras; we will denote them respectively by $\mathcal{A}$, $\mathcal{A}'$ and $\mathcal{A}_{\varphi}$, similarly we write $\mathcal{N}_{\tilde{\varphi}}$ for $N_{\tilde{\varphi}} \cap \mathcal{A}$.

Let us first check that $\mathcal{A}_\varphi$ is a Hopf $*$-algebra quotient of $\mathcal{A}'$. In \cite{bbi} it was shown that $\mathcal{A}'$ is the quotient of $\mathcal{A}$ by the largest Hopf $*$-ideal contained in $\ker\pi|_{\mathcal{A}}$, namely:
$$I_\pi^+ =\bigcap_{r\geq 1}\bigcap_{k_1,\ldots,k_r\in\mathbb{Z}} {\rm ker}\big((\pi\circ
S^{k_1})\otimes \ldots\otimes (\pi\circ S^{k_r})\big)\circ \Delta^{(r-1)}$$

Since the trace is faithful on $M_n(\mathbb{C})$ and on its tensor products, we have:
$$\ker\left(\pi^{\otimes k}\circ \Delta^{(k-1)}\right) = N_{\varphi^{*k}}$$

We conclude that we have:
$$I^+_\pi \subseteq \bigcap_{k=1}^\infty {\rm ker}\, \pi^{\otimes
  k}\circ \Delta^{(k-1)} = \bigcap_{n=1}^\infty \mathcal{N}_{\varphi^{\star n}}
\subseteq \mathcal{N}_{\tilde{\varphi}}$$

The inclusion of the ideals yields a canonical quotient map $\mathcal{A}'\to \mathcal{A}_\varphi$ and $\tilde{\varphi}$ induces in this way an idempotent state $\tilde{\varphi}'$ on $\mathcal{A}'$, which has to satisfy $\tilde{\varphi}'*h'=h'=h'*\tilde{\varphi}'$ (note that of course we can convolve arbitrary, not necessarily continuous functionals on a Hopf $^*$-algebra). Composing these states with the
canonical quotient map $\gamma:\mathcal{A}\to \mathcal{A}'$, we get:
$$\tilde{\varphi}*(h'\circ \gamma)= (h'\circ \gamma) = (h'\circ \gamma)*\tilde{\varphi}$$

Let now $\psi:M_n(\mathbb C)\to\mathbb C$ be a positive linear functional. Then
there exists a positive matrix $Q\in M_n(\mathbb{C})$ such that $\psi(B)=tr(QB)$, for any $B\in M_n(\mathbb C)$. Therefore for all the positive matrices $B\in M_n(\mathbb{C})_+$ we have:
$$\psi(QB)=tr(Q^{1/2}BQ^{1/2}) \le ||Q|| tr(B)$$

In other words, a non-negative multiple of $tr$ dominates $\psi$ on $M_n(\mathbb{C})$. It follows that a non-negative multiple of $\varphi=tr\circ\pi$ dominates $\psi\circ\pi$ on $\mathcal{A}$. Since $\tilde{\varphi}$ is the Ces\`aro limit of the convolution powers of $\varphi$, we also have $\varphi*\tilde{\varphi}=\tilde{\varphi}=\tilde{\varphi}*\varphi$.

According now to Lemma 2.2 in \cite{van}, if $\omega,\phi:\mathcal{A}\to\mathbb{C}$ are two states such that $\omega*\phi=\phi$, and if $\rho:\mathcal{A}\to\mathbb{C}$ is a positive linear functional dominated by some non-negative multiple of $\omega$, then $\rho*\phi=\rho(1)\phi$. In our situation, as described above, this gives:
$$(\psi\circ \pi)* \tilde{\varphi} = \psi(I) \tilde{\varphi}$$

Since any linear functional on $M_n(\mathbb{C})$ can be written as a linear combination of four positive linear functionals, this must hold for all linear functionals $\psi\in M_n(\mathbb{C})^*$.

Similarly we get $\tilde{\varphi}*(\psi\circ \pi) = \psi(I)\tilde{\varphi}$ for all $\psi\in M_n(\mathbb{C})^*$. Since idempotent states are invariant under the antipode, cf. Equation (3.1) in \cite{fs1}, we can deduce also that:
\begin{equation} \label{convform}(\psi\circ\pi\circ S^k)*\tilde{\varphi}=\psi(I)\tilde{\varphi}=\tilde{\varphi}*(\psi\circ\pi\circ S^k)\end{equation}
In \cite{bbi} it is shown that clear from the construction of $\mathcal{A}'$ can be constructed as the quotient $\mathcal{A}/I^+_\pi$. Hence each functional on $\mathcal{A}'$ can be identified with a functional in the annihilator of the pre-annihilator of the set $\mathcal{C}_{\pi}$ defined in Lemma 2.3 of \cite{bbi}.
As the formula \eqref{convform} is `stable under convolution, we can replace in it $\psi\circ\pi\circ S^k$, with $k\in \mathbb{Z}$, by an arbitrary functional in $C_{\pi}$. Further Theorem 1.2.6 of \cite{dnr} shows that the annihilator of the pre-annihilator of $C_{\pi}$ is equal to the closure of $C_{\pi}$ in the so-called finite topology. Applying fundamental theorem on coalgebras and a basic limiting argument we see that in fact
\[ \omega*\tilde{\varphi}=\omega(I)\tilde{\varphi}=\tilde{\varphi}* \omega\]
for an arbitrary functional $\omega$ on $\mathcal{A}$ which vanishes on $I_{\pi}^+$. Thus $(h'\circ \gamma)*\tilde{\varphi}=\tilde{\varphi}=\tilde{\varphi}*(h'\circ \gamma)$, so $h'\circ\gamma=\tilde{\varphi}$, and we are done.
\end{proof}

\begin{corollary}
A representation $\pi:A\to M_n(\mathbb C)$ is inner faithful if and only if $\tilde{\varphi}=h$, where $\varphi=tr\circ\pi$, and $h$ is the Haar functional of $A$.
\end{corollary}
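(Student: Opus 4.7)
My plan is to derive the corollary directly from Theorem 2.2 via the bijection (modulo the CQG-algebra identification agreed on after Definition 1.3) between Hopf $C^*$-algebra quotients of $A$ and their associated Haar-type idempotent states. Definition 1.4 phrases inner faithfulness as ``$A=A'$'', while on the idempotent-state side the Haar functional $h$ is exactly the idempotent state associated to the trivial quotient $\mathrm{id}:A\to A$ in the sense of Definition 1.6. So the corollary is essentially the statement that the correspondence ``quotient $\mapsto$ induced idempotent state'' is injective, combined with the computation of $\tilde{\varphi}$ provided by Theorem 2.2.

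For the forward direction, I will simply note that if $\pi$ is inner faithful then $A=A'$, the canonical quotient $\gamma:A\to A'$ is the identity, and the Haar state $h'$ of $A'$ is $h$; Theorem 2.2 then reads $\tilde{\varphi}=h'\circ \gamma = h$.

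For the converse, I will assume $\tilde{\varphi}=h$ and let $\gamma:A\to A'$ and $h'$ be as in Theorem 2.2, so that $h'\circ \gamma = \tilde{\varphi} = h$. Passing to the dense CQG algebras $\mathcal{A}\subseteq A$ and $\mathcal{A}'\subseteq A'$, I will use the Woronowicz fact that the Haar state is faithful on any CQG algebra: for $a\in \mathcal{A}$ with $\gamma(a)=0$ we get $h(a^*a)=h'(\gamma(a)^*\gamma(a))=0$, and faithfulness of $h$ on $\mathcal{A}$ forces $a=0$. Hence $\gamma$ is injective on $\mathcal{A}$, so $\mathcal{A}=\mathcal{A}'$ as CQG algebras, and by the identification convention this means $A=A'$, i.e. $\pi$ is inner faithful.

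The only subtle point is the mandatory detour through the CQG algebras, needed because the Haar state on the universal $C^*$-completion is not in general faithful; this is precisely the reason the paper adopts the CQG-level identification from the outset, so no genuine obstacle arises and the proof is a short corollary of Theorem 2.2.
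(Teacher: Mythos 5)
Your proposal is correct and follows the same route as the paper, which likewise deduces the corollary from Theorem 2.2 together with the observation that the identity quotient $A\to A$ has associated idempotent state $h$. The only difference is that you make explicit the injectivity of the correspondence between quotients and their Haar-type idempotent states (via faithfulness of the Haar state on the dense CQG algebra), a point the paper leaves implicit in its one-line proof; your added detail is accurate and harmless.
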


\begin{proof}
This follows from Theorem 2.2, and from the basic fact that the idempotent state associated to the identity quotient map $A\to A$ is the Haar functional of $A$.
\end{proof}

\begin{corollary}
If $A$ is a Hopf image (that is, if it is inner linear), then it satisfies the Kac algebra assumption $S^2=id$.
\end{corollary}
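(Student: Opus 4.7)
The plan is to chain Corollary 2.3 with the elementary observation that the Cesàro limit of a tracial state stays tracial, and then invoke Woronowicz's characterization of Kac type quantum groups as those whose Haar state is a trace. By definition, $A$ inner linear means there is an inner faithful representation $\pi:A\to M_n(\mathbb C)$, and Corollary 2.3 then identifies the Haar functional $h$ of $A$ with $\widetilde{\varphi}$, where $\varphi=\mathrm{tr}\circ\pi$. So the entire problem reduces to showing that $\widetilde{\varphi}$ is tracial, and extracting $S^2=\mathrm{id}$ from the traciality of $h$.

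For the first reduction, $\varphi=\mathrm{tr}\circ\pi$ is tracial because $\pi$ is a $*$-homomorphism and $\mathrm{tr}$ is tracial on $M_n(\mathbb C)$. The second fact I would need is that convolution preserves traciality on a Hopf $C^*$-algebra: given tracial states $\varphi_1,\varphi_2$ and elements $a,b\in\mathcal A$, one writes $(\varphi_1*\varphi_2)(ab)=(\varphi_1\otimes\varphi_2)(\Delta(a)\Delta(b))$ and uses the fact that $\Delta$ is a $*$-homomorphism together with the traciality of $\varphi_1\otimes\varphi_2$ to move the factors around; this is already invoked in the proof of Theorem 2.2 and is a short verification. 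Since each $\varphi^{*k}$ is then tracial, so is the norm limit $\widetilde{\varphi}=\lim_N\tfrac1N\sum_{k=1}^N\varphi^{*k}$.

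Combining the two steps, $h=\widetilde{\varphi}$ is a tracial state. The final input is the well-known theorem of Woronowicz (see e.g.\ \cite{wo2}) asserting that a compact quantum group is of Kac type, i.e.\ $S^2=\mathrm{id}$, if and only if its Haar state is a trace; one direction — the one we need — uses the formula $h(ab)=h(b\,f_1\!*\!a\!*\!f_1)$ expressing the modular automorphism of $h$ through Woronowicz's characters $f_z$, forcing $f_1$ (and hence the whole family) to be trivial when $h$ is tracial, which in turn yields $S^2=\mathrm{id}$ via the formula $S^2(a)=f_{-1}*a*f_1$.

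The only step that feels non-routine is checking that convolution preserves traciality; everything else is either a direct application of Corollary 2.3 or a citation to standard Woronowicz theory. I would expect the whole proof to fit in a few lines once these ingredients are in place.
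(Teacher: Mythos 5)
Your proof is correct and follows essentially the same route as the paper: the paper's own argument is exactly that $\varphi=\mathrm{tr}\circ\pi$ is tracial, that traciality passes to convolution powers and to the Ces\`aro limit $\tilde{\varphi}=h$ (via Corollary 2.3), and that $S^2=\mathrm{id}$ then follows from Woronowicz's characterization of Kac type via traciality of the Haar state. You merely spell out the verification that convolution preserves traciality, which the paper takes for granted (it already invokes it in the proof of Theorem 2.2).
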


\begin{proof}
Since $\varphi=tr\circ\pi$ is tracial, so is the Ces\`aro limit $\tilde{\varphi}$, so the result follows from the well-known fact that $S^2=id$ if and only if the Haar functional is a trace \cite{wo2}.
\end{proof}

\section{The matrix case}

We have seen in the previous section that the notions of Hopf image and inner faithfulness from \cite{bbi} have a purely analytic formulation, in the spirit of \cite{fs1}, in terms of idempotent states and Ces\`aro limits. It is of course possible to deduce from this analytic picture a number of new proofs, sometimes simpler, for a number of algebraic results in \cite{bbi}.

In this section we will present such an application. We will directly focus on the main result in \cite{bbi}, which is a Tannakian formulation of the notion of Hopf image, and we will present here a simple, nice analytic proof, that we believe to be potentially useful.

Let us first recall the following definition, inspired from \cite{wo1}:

\begin{definition}
An orthogonal Hopf $C^*$-algebra is a $C^*$-algebra $A$, with an orthogonal matrix $u\in M_n(A)$ (i.e. $u=\bar{u}$, $u^t=u^{-1}$) whose coefficients generate $A$, such that:
\begin{enumerate}
\item The formula $\Delta(u_{ij})=\sum_ku_{ik}\otimes u_{kj}$ defines a morphism $A\to A\otimes A$.

\item The formula $\varepsilon(u_{ij})=\delta_{ij}$ defines a morphism $A\to\mathbb C$.

\item The formula $S(u_{ij})=u_{ji}$ defines a morphism $A\to A^{op}$.
\end{enumerate}
\end{definition}

The basic example of such an algebra is $A=C(G)$, where $G\subset O_n$ is a closed subgroup, with $u_{ij}(g)=g_{ij}$. The other basic example is $A=C^*(\Gamma)$, where $\Gamma=<g_1,\ldots,g_n>$ is a discrete group with generators satisfying $g_i^2=1$, with $u=diag(g_1,\ldots,g_n)$. See \cite{wo1}.

We denote by $\#(\lambda\in T)$ the multiplicity of an eigenvalue $\lambda$ of a matrix $T$. Also, we let $\chi=Tr(u)$ be the character of the fundamental corepresentation of $A$, and we denote as usual by $h:A\to\mathbb C$ the Haar functional. With these notations, our result here is:

\begin{theorem}
Let $\pi:A\to M_n(\mathbb C)$ be a representation, given by $u_{ij}\to P_{ij}$, and consider the matrix $T_k=(tr(P_{i_1j_1}\ldots P_{i_kj_k}))_{i_1\ldots i_k,j_1\ldots j_k}$. Then the following are equivalent:
\begin{enumerate}
\item $\pi$ is inner faithful.

\item $\#(1\in T_k)\leq h(\chi^k)$, for any $k$.
\end{enumerate}
\end{theorem}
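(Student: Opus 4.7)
The plan is to combine Corollary 2.3, which says $\pi$ is inner faithful iff $\tilde{\varphi}=h$, with an explicit computation of $\tilde{\varphi}$ on the matrix coefficients of the tensor powers $u^{\otimes k}$. Writing $u_{I,J}^{(k)}:=u_{i_1j_1}\cdots u_{i_kj_k}$ for multi-indices $I=(i_1,\dots,i_k)$ and $J=(j_1,\dots,j_k)$, multiplicativity of $\Delta$ yields $\Delta(u_{I,J}^{(k)})=\sum_C u_{I,C}^{(k)}\otimes u_{C,J}^{(k)}$, so the matrix $(\varphi(u_{I,J}^{(k)}))_{I,J}$ equals $T_k$ and, inductively, $(\varphi^{*m}(u_{I,J}^{(k)}))_{I,J}=T_k^{\,m}$. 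Cesàro-averaging then gives
\[
\bigl(\tilde{\varphi}(u_{I,J}^{(k)})\bigr)_{I,J}=\lim_{N\to\infty}\frac{1}{N}\sum_{m=1}^{N}T_k^{\,m}=:\tilde{T}_k,
\]
which is the spectral projection of $T_k$ onto $\ker(T_k-I)$, a matrix of rank $\#(1\in T_k)$.

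By Theorem 2.2, $\tilde{\varphi}=h'\circ\gamma$, where $\gamma:A\to A'$ is the Hopf image quotient and $h'$ is the Haar state of $A'$. Setting $u'=(\gamma\otimes\mathrm{id})(u)$, this identifies $\tilde{T}_k=(\mathrm{id}\otimes h')((u')^{\otimes k})=:\Pi'_k$, the orthogonal Haar projection onto $\mathrm{Fix}((u')^{\otimes k})$, of rank $h'((\chi')^k)$ where $\chi'=Tr(u')$. Consequently $\#(1\in T_k)=h'((\chi')^k)$ for every $k$. On the other hand, applying $\gamma$ to the fixed-vector relation $\sum_J u_{I,J}^{(k)}\xi_J=\xi_I\cdot 1_A$ shows $\mathrm{Fix}(u^{\otimes k})\subseteq\mathrm{Fix}((u')^{\otimes k})$, hence $\Pi_k\leq\Pi'_k$ as projections and the reverse inequality
\[
h(\chi^k)=\mathrm{rank}\,\Pi_k\leq\mathrm{rank}\,\Pi'_k=h'((\chi')^k)=\#(1\in T_k)
\]
holds unconditionally.

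Combined, condition (2) is therefore equivalent to the equality $\mathrm{Fix}(u^{\otimes k})=\mathrm{Fix}((u')^{\otimes k})$ for every $k$, and the main obstacle is extracting $A=A'$ from this equality. In the orthogonal setting $u=\bar{u}$ makes $u$ self-conjugate, so every intertwiner space $\mathrm{Hom}(u^{\otimes p},u^{\otimes q})$ identifies with $\mathrm{Fix}(u^{\otimes(p+q)})$; equality of all fix spaces thus upgrades to equality of the full Tannakian categories generated by $u$ and by $u'$, and Woronowicz's reconstruction theorem forces $\mathcal{A}=\mathcal{A}'$, i.e.\ $A=A'$. The reverse implication (1)$\Rightarrow$(2) is immediate from Corollary 2.3: if $\tilde{\varphi}=h$ then $\#(1\in T_k)=h(\chi^k)$ exactly.
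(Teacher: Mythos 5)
Your proposal is correct, and its first half coincides with the paper's: both compute $(\tilde{\varphi}(u_{i_1j_1}\cdots u_{i_kj_k}))_{I,J}$ as the Ces\`aro limit of the powers of $T_k$, i.e.\ the orthogonal projection onto the $1$-eigenspace, and both establish the unconditional inclusion $\mathrm{Fix}(u^{\otimes k})\subseteq(1\in T_k)$ by evaluating $tr\circ\pi$ on the fixed-vector relation. Where you diverge is in closing the argument. The paper stays entirely analytic: it also computes $h(u_{i_1j_1}\cdots u_{i_kj_k})$ as the projection onto $\mathrm{Fix}(u^{\otimes k})$ via Woronowicz's formula $(id\otimes h)(u^{\otimes k})=Proj(Fix(u^{\otimes k}))$, so that Corollary 2.3 plus density of the monomials turns inner faithfulness directly into the equality of the two projections, hence (given the inclusion) into the dimension inequality $\#(1\in T_k)\leq h(\chi^k)$ --- no Tannakian input at all. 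You instead invoke Theorem 2.2 to identify $\#(1\in T_k)$ with $h'((\chi')^k)=\dim\mathrm{Fix}((u')^{\otimes k})$, reduce condition (2) to the equality $\mathrm{Fix}(u^{\otimes k})=\mathrm{Fix}((u')^{\otimes k})$ for all $k$, and then use Frobenius duality for the self-conjugate $u$ together with Woronowicz's Tannaka--Krein reconstruction to force $\mathcal A=\mathcal A'$. This is valid (it is essentially the deduction from the Tannakian formulation of \cite{bbi} that the paper mentions and deliberately sidesteps in favour of a ``purely analytic proof''), and it buys you the extra identity $\#(1\in T_k)=h'((\chi')^k)$ describing the Hopf image; the cost is reliance on the full reconstruction theorem where the paper needs only the Haar-projection formula from \cite{wo1}.
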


\begin{proof}
This result reminds the Tannakian formulation of the inner faithfulness in \cite{bbi}, and can be deduced from it. We present below a purely analytic proof, based on Corollary 2.3 above. First, since the elements of type $u_{i_1j_1}\ldots u_{i_kj_k}$ span a dense subalgebra of $A$, the inner faithfulness of $\pi$ is equivalent to the following collection of equalities:
$$\tilde{\varphi}(u_{i_1j_1}\ldots u_{i_kj_k})=h(u_{i_1j_1}\ldots u_{i_kj_k})$$

The left term can be computed by using the fact that the multiplication Ces\`aro limit of any matrix $||T||\leq 1$ is the orthogonal projection onto its 1-eigenspace:
\begin{eqnarray*}
\tilde{\varphi}(u_{i_1j_1}\ldots u_{i_kj_k})
&=&\lim_{N\to\infty}\frac{1}{N}\sum_{r=1}^N\varphi^{*r}(u_{i_1j_1}\ldots u_{i_kj_k})\\
&=&\lim_{N\to\infty}\frac{1}{N}\sum_{r=1}^N((T_k)^r)_{i_1\ldots i_k,j_1\ldots j_k}\\
&=&\left(\lim_{N\to\infty}\frac{1}{N}\sum_{r=1}^N(T_k)^r\right)_{i_1\ldots i_k,j_1\ldots j_k}\\
&=&(Proj(1\in T_k))_{i_1\ldots i_k,j_1\ldots j_k}
\end{eqnarray*}

Regarding now the right term, we use the general philosophy in \cite{csn}. We have:
\begin{eqnarray*}
h(u_{i_1j_1}\ldots u_{i_kj_k})
&=&h((u^{\otimes k})_{i_1\ldots i_k,j_1\ldots j_k})\\
&=&((id\otimes h)(u^{\otimes k}))_{i_1\ldots i_k,j_1\ldots j_k}\\
&=&Proj(Fix(u^{\otimes k}))_{i_1\ldots i_k,j_1\ldots j_k}
\end{eqnarray*}

Here we used the basic fact from \cite{wo1} that when integrating the coefficients of a corepresentation $r$ we obtain the projection onto the space of fixed points $Fix(r)$.

Summing up, the inner linearity of $\pi$ is equivalent to the following condition:
$$(1\in T_k)=Fix(u^{\otimes k})$$

Now given a vector $\xi\in(\mathbb C^n)^{\otimes k}$, denoted $\xi=(\xi_{i_1\ldots i_k})_{i_1\ldots i_k}$, we have:
\begin{eqnarray*}
(T_k\xi)_{i_1\ldots i_k}
&=&\sum_{j_1\ldots j_k}tr(P_{i_1j_1}\ldots P_{i_kj_k})\xi_{j_1\ldots j_k}\\
&=&\sum_{j_1\ldots j_k}(tr\circ\pi)(u_{i_1j_1}\ldots u_{i_kj_k})\xi_{j_1\ldots j_k}\\
&=&(tr\circ\pi)\sum_{j_1\ldots j_k}\xi_{j_1\ldots j_k}u_{i_1j_1}\ldots u_{i_kj_k}
\end{eqnarray*}

Thus, by assuming that we have $\xi\in Fix(u^{\otimes k})$, we obtain the following formula:
$$(T_k\xi)_{i_1\ldots i_k}=(tr\circ\pi)(\xi_{i_1\ldots i_k}1)=\xi_{i_1\ldots i_k}tr(1_n)=\xi_{i_1\ldots i_k}$$

But this tells us that $\xi$ is a 1-eigenvector of $T_k$, so we obtain $(1\in T_k)\supset Fix(u^{\otimes k})$. Now by getting back to the inner linearity criterion $(1\in T_k)=Fix(u^{\otimes k})$ found above, since we have an inclusion in one sense, this criterion is equivalent to:
$$\dim(1\in T_k)\leq\dim(Fix(u^{\otimes k}))$$

Since the left term is by definition the multiplicity $\#(1\in T_k)$, and the right term is obtained by integrating the character of $u^{\otimes k}$, we obtain the result.
\end{proof}

\section{Open problems}

Let $A=C(S_n^+)$ be the quantum permutation algebra, constructed by Wang in \cite{wan}. That is, $A$ is the universal $C^*$-algebra generated by $n^2$ abstract projections $u_{ij}$, which sum up to $1$ on each row and column of $u=(u_{ij})$. We have the following questions:
\begin{enumerate}
\item Is $A$ inner linear?

\item Is $A''$ Connes-embeddable?

\item Does $A$ have an Hadamard matrix model?

\item Does $A$ have an inner faithful matrix model?
\end{enumerate}

These questions, all important, and basically open since Wang's paper \cite{wan}, are related by the sequence of implications (3)$\implies$(1)$\implies$(4)$\implies$(2). More precisely:
\begin{enumerate}
\item This is the central question. In principle Theorem 3.2 above is a good criterion here, but no candidate for such a representation is available so far.

\item Yet another central question. This is known to be slightly weaker that question (1), because of the results of Vaes in \cite{vae}.

\item This deep subfactor question, stronger than (1), is due to Jones \cite{jon}. We refer to the article \cite{ban} for the complete story here.

\item Yet another subtle question, asking this time for an inner faithful representation of type $\pi:A\to L^\infty(X)\otimes M_N(\mathbb C)$. See \cite{bco}.
\end{enumerate}

We believe that (4) is the ``good question'', and that Theorem 3.2 above can help, once a candidate for such a model is found. However, no such candidate is available so far.


\begin{thebibliography}{99}

\bibitem{abi}N. Andruskiewitsch and J. Bichon, Examples of inner linear Hopf algebras, {\em Rev. Un. Mat. Argentina} {\bf 51} (2010), 7--18.

\bibitem{ban}T. Banica, Quantum permutations, Hadamard matrices, and the search for matrix models, {\tt arxiv:1109.4888}.

\bibitem{bbi}T. Banica and J. Bichon, Hopf images and inner faithful representations, {\em Glasg. Math. J.} {\bf 52} (2010), 677--703.

\bibitem{bbs}T. Banica, J. Bichon and J.-M. Schlenker, Representations of quantum permutation algebras, {\em J. Funct. Anal.} {\bf 257} (2009), 2864--2910.

\bibitem{bco}T. Banica and B. Collins, Integration over the Pauli quantum group, {\em J. Geom. Phys.} {\bf 58} (2008), 942--961.

\bibitem{csn}B. Collins and P. \'Sniady, Integration with respect to the Haar measure on unitary, orthogonal and symplectic groups, {\em Comm. Math. Phys.} {\bf 264} (2006), 773--795.

\bibitem{dik}M. Dijkhuizen and T. Koornwinder, CQG algebras - a direct algebraic approach to compact quantum groups, {\em Lett. Math. Phys.} {\bf 32} (1994), 315--330.

\bibitem{dnr}S. D\u asc\u alescu, C. N\u ast\u asescu and \c S. Raianu, Hopf algebras, Marcel Dekker (2001).

\bibitem{fs1}U. Franz and A. Skalski, On idempotent states on quantum groups, {J. Algebra} {\bf 322} (2009), 1774--1802.

\bibitem{fs2}U. Franz and A. Skalski, A new characterisation of idempotent states on finite and compact quantum groups, {\em C. R. Acad. Sci. Math.} {\bf 347} (2009), 991--996.

\bibitem{fst}U. Franz, A. Skalski and R. Tomatsu, Idempotent states on compact quantum groups and their classification on U$_q$(2), SU$_q$(2), and SO$_q$(3), {\em J. Noncommut. Geom.}, to appear. 

\bibitem{jon}V.F.R. Jones, Planar algebras I, {\tt arxiv:math/9909027}.

\bibitem{kit}Y. Kawada and K. It\^o, On the probability distribution on a compact group I, {\em Proc. Phys.-Math. Soc. Japan} {\bf 22} (1940), 977--998.

\bibitem{pal}A.K. Pal, A counterexample on idempotent states on compact quantum groups, {\em Lett. Math. Phys.} {\bf 37} (1996), 75--77.

\bibitem{ssk}P. Salmi and A. Skalski, Idempotent states on locally compact quantum groups, {\em Q. J. Math.}, to appear, available at {\tt arXiv:1102.2051}.

\bibitem{vae}S. Vaes, Strictly outer actions of groups and quantum groups, {\em J. Reine Angew. Math.} {\bf 578} (2005), 147--184.

\bibitem{van}A. Van Daele, The Haar measure on a compact quantum group, {\em Proc. Amer. Math. Soc.} {\bf 123} (1995), 3125--3128.

\bibitem{wan}S. Wang, Quantum symmetry groups of finite spaces, {\em Comm. Math. Phys.} {\bf 195} (1998), 195--211.

\bibitem{wo1}S.L. Woronowicz, Compact matrix pseudogroups, {\em Comm. Math. Phys.} {\bf 111} (1987), 613--665.

\bibitem{wo2}S.L. Woronowicz, Compact quantum groups, in ``Sym\'etries quantiques'', North-Holland (1998), 845--884.

\end{thebibliography}
\end{document}